\newcommand{\R}{\mathbb{R}}
\newcommand{\la}{\lambda}
\newcommand{\F}{N^{-\lambda}}
\newtheorem{theorem}{Theorem}
\newtheorem{cor}{Corollary}
\theoremstyle{definition}
\address{Departamento de Matem\'aticas, Universidad Aut\'onoma de
Madrid, Madrid 28049, Spain}\email{keith.rogers@uam.es}
\address{University of California, Los Angeles, CA 90095--1555,
USA}\email{paco.villarroya@uv.es}
\subjclass[2000]{35Q55, 42B25}
\author{Keith M. Rogers}
\author{Paco Villarroya}
\date{}
\title[Estimates for maximal operators associated to the wave
equation]{Sharp estimates for
  maximal operators\\associated to the wave equation}
\keywords{Wave equation, pointwise convergence.}
\thanks{The first author is
supported by MEC project
 MTM2004-00678 and UAM-CM project CCG06-UAM/ESP-0286,
 and the second by MEC projects
MTM2004-21420 and MTM2005-08350.}
\begin{document}

\begin{abstract}
The wave equation, $\partial_{tt}u=\Delta u,$ in $\R^{n+1}$,
considered with initial data $u(x,0)=f\in H^s(\R^n)$ and
$u'(x,0)=0,$ has a solution which we denote by
$\frac{1}{2}(e^{it\sqrt{-\Delta}}f+e^{-it\sqrt{-\Delta}}f)$. We give
almost sharp conditions under which $\,\sup_{0<t<1}|e^{\pm
it\sqrt{-\Delta}}f|\,$ and $\,\sup_{t\in\R}|e^{\pm
it\sqrt{-\Delta}}f|$ are bounded from $H^s(\R^n)$ to
 $L^q(\R^n)$.
\end{abstract}

\maketitle

\section{Introduction}\label{intro}

The Schr\"odinger equation, $i\partial_tu+\Delta u=0,$ in
$\R^{n+1},$ with initial datum $f$ contained in a Sobolev space
$H^s(\R^n),$ has solution $e^{it\Delta}f$ which can be formally
written as
\begin{equation}\label{form}
e^{it\Delta}f(x)=\int \widehat{f}(\xi)e^{2\pi i(x\cdot\xi-2\pi
t|\xi|^2)}d\xi.
\end{equation}
The minimal regularity of $f$ under which $e^{it\Delta}f$ converges
almost everywhere to~$f$, as $t$ tends to zero, has been studied
extensively. By standard arguments, the problem reduces to the
minimal value of $s$ for which
\begin{equation}\label{two}
\|\sup_{0<t<1}|e^{it\Delta}f|\,\|_{L^q(\mathbb{B}^n)}\le C_{n,q,s}
\|f\|_{H^s(\R^n)}
\end{equation}
holds, where $\mathbb{B}^n$ is the unit ball in $\R^n$.

 In one spatial dimension, L.
Carleson \cite{ca} (see also \cite{keru}) showed that (\ref{two})
holds when $s\ge 1/4$, and B.E.J. Dahlberg and C.E. Kenig~\cite{da}
showed that this is sharp in the sense that it is not true when
$s<1/4$. In two spatial dimensions, significant contributions have
been made by J. Bourgain~\cite{bo0,bo1}, A. Moyua, A. Vargas and L.
Vega \cite{movave, movave2}, and T. Tao and Vargas \cite{ta0,ta1}.
The best known result is due to S. Lee \cite{le} who showed that
(\ref{two}) holds when $s>3/8$.
In higher dimensions, P. Sj\"olin \cite{sj} and L. Vega \cite{ve2}
independently showed that (\ref{two}) holds when $s>1/2$.

Replacing the unit ball $\mathbb{B}^n$ in (\ref{two}) by the whole
space $\R^n,$  there has also been significant interest (see
\cite{car}, \cite{co}, \cite{ro1}, \cite{rovave}, \cite{sj9},
\cite{ta0}, \cite{ta1}) in the global bounds
\begin{equation*}
\|\sup_{0<t<1}|e^{it\Delta}f|\,\|_{L^q(\R^n)}\le C_{n,q,s}
\|f\|_{H^s(\R^n)}
\end{equation*}
and
\begin{equation*}
\|\sup_{t\in\R}|e^{it\Delta}f|\,\|_{L^q(\R^n)}\le C_{n,q,s}
\|f\|_{H^s(\R^n)},
\end{equation*}
sometimes in connection with the well-posedness with certain initial
value problems~(see \cite{kepove2}). In one spatial dimension there
are almost sharp bounds (see \cite{kepove}, \cite{kepove2},
\cite{rovi}, \cite{sj0}, \cite{ve1}), but in higher dimensions the
problem remains open.

The wave equation, $\partial_{tt}u=\Delta u,$ in $\R^{n+1}$,
considered with initial data $u(\cdot,0)=f$ and $u'(\cdot,0)=0,$ has
solution which can be formally written as
$$\frac{1}{2}\left(e^{it\sqrt{-\Delta}}f+e^{-it\sqrt{-\Delta}}f\right)=\int
\widehat{f}(\xi)e^{2\pi i x\cdot\xi}\cos{(2\pi t|\xi|)}d\xi,
$$
where
\begin{equation}\label{form}
e^{\pm it\sqrt{-\Delta}}f(x)=\int \widehat{f}(\xi)e^{2\pi
i(x\cdot\xi\pm  t|\xi|)}d\xi.
\end{equation}

Mainly we will be concerned with
 the global bounds
\begin{equation}\label{th}
\|\sup_{0<t<1}|e^{\pm it\sqrt{-\Delta}}f|\,\|_{L^q(\R^n)}\le
C_{n,q,s} \|f\|_{H^s(\R^n)}
\end{equation}
and
\begin{equation}\label{fo}
\|\sup_{t\in\R}|e^{\pm it\sqrt{-\Delta}}f|\,\|_{L^q(\R^n)}\le
C_{n,q,s} \|f\|_{H^s(\R^n)}.
\end{equation}
We note that equation (\ref{fo}) is simply a mixed norm Strichartz
estimate.

 Everything that will follow is true for the solution to the wave equation
with initial derivative equal to zero, however, for notational
convenience, we will write things in terms of the one-sided
solutions $e^{\pm it\sqrt{-\Delta}}f$.

Let
$$
s_{n,q}=\max\left\{n\left(\frac{1}{2}-\frac{1}{q}\right),
\frac{n+1}{4}-\frac{n-1}{2q}\right\}\,\,\,\,\,\,\,\,\textrm{and}\,\,\,\,\,\,\,\,q_n=\frac{2(n+1)}{n-1}.$$
We will prove the following almost sharp theorems. The positive part
of Theorem~\ref{t1}, when $q=2$, is due to M. Cowling~\cite{co}.

\begin{theorem}\label{t1} If $q\in[2,\infty]$ and $s>s_{n,q},$ then
(\ref{th}) holds. If $q<2$ or $s<s_{n,q}$, then (\ref{th}) does not
hold.
\end{theorem}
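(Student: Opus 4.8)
The plan is to reduce (\ref{th}) to frequency-localized estimates and to recognize the two quantities inside the maximum defining $s_{n,q}$ as the signatures of two distinct mechanisms that exchange dominance at $q=q_n$. First I would perform a Littlewood--Paley decomposition $f=\sum_k P_kf$, with $\widehat{P_kf}$ supported where $|\xi|\sim 2^k$, so that it suffices to prove the single-scale bound
\begin{equation*}
\big\|\sup_{0<t<1}|e^{\pm it\sqrt{-\Delta}}P_kf|\big\|_{L^q(\R^n)}\lesssim 2^{k\,s_{n,q}}\|P_kf\|_{L^2(\R^n)},
\end{equation*}
since $s>s_{n,q}$ then lets me sum the geometric series in $k\ge0$ by Cauchy--Schwarz (the low frequencies being harmless), arriving at $\|f\|_{H^s}$. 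The key reduction for the single scale is to pass from the supremum to a space--time norm: writing $F(x,t)=e^{\pm it\sqrt{-\Delta}}P_kf(x)$ and using $\sup_t|F|^q\lesssim\int_0^1(|F|^q+|F|^{q-1}|\partial_tF|)\,dt$ together with $\|\partial_tF\|\sim 2^k\|F\|$ on this frequency band, one gets $\|\sup_t|F|\|_{L^q_x}\lesssim 2^{k/q}\|F\|_{L^q_{t,x}([0,1]\times\R^n)}$.

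For the positive part I would establish the single-scale bound at three exponents and interpolate (after the usual linearization of the sup by a measurable selection $t(x)$). At $q=\infty$ the trivial bound $\sup_t|F(x)|\le\int_{|\xi|\sim2^k}|\widehat{P_kf}|\lesssim 2^{kn/2}\|P_kf\|_2$ gives the exponent $n/2$. At $q=q_n$ the diagonal Strichartz estimate for the wave equation yields $\|F\|_{L^{q_n}_{t,x}}\lesssim 2^{k(n/2-(n+1)/q_n)}\|P_kf\|_2$, and combined with the $2^{k/q}$ loss above this produces exactly the exponent $n/(n+1)=s_{n,q_n}$. At $q=2$ the endpoint $s_{n,2}=1/2$ is Cowling's estimate, recovered from the same reduction since $\|F\|_{L^2_{t,x}}\sim\|P_kf\|_2$ by Plancherel. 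Interpolating between $q=2$ and $q=q_n$ reproduces the affine-in-$1/q$ exponent $\tfrac{n+1}{4}-\tfrac{n-1}{2q}$ on $[2,q_n]$, and between $q=q_n$ and $q=\infty$ reproduces $n(\tfrac12-\tfrac1q)$ on $[q_n,\infty]$; in both ranges this equals $s_{n,q}$.

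For the negative part I would exhibit three counterexamples. That $q\ge2$ is necessary follows by letting $t\to0$: since $\sup_{0<t<1}|e^{\pm it\sqrt{-\Delta}}f|\ge|f|$ while $H^s\not\hookrightarrow L^q(\R^n)$ for $q<2$ (take widely separated bumps $\sum_jc_j\phi(\cdot-x_j)$ with $\sum|c_j|^2<\infty$ but $\sum|c_j|^q=\infty$), the bound must fail. The necessity of $s\ge n(\tfrac12-\tfrac1q)$ again uses $t\to0$ and the focusing datum $\widehat f=\chi_{\{|\xi|<R\}}$: here $\|f\|_{H^s}\sim R^{s+n/2}$ while $|f|\gtrsim R^n$ on a ball of radius $R^{-1}$, so $\|f\|_q\gtrsim R^{n-n/q}$. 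The necessity of $s\ge\tfrac{n+1}{4}-\tfrac{n-1}{2q}$ is the genuinely wave-theoretic example: take a Knapp datum with $\widehat f=\chi_S$, where $S$ is a cap of the annulus $|\xi|\sim R$ of angular width $R^{-1/2}$, so that $\|f\|_{H^s}\sim R^{s+(n+1)/4}$; the phase stays coherent on a space--time tube, giving $|F|\gtrsim|S|\sim R^{(n+1)/2}$ on a spatial set of measure $\gtrsim R^{-(n-1)/2}$ once the supremum over $t\in(0,1)$ is taken, whence $\|\sup_t|F|\|_q\gtrsim R^{(n+1)/2-(n-1)/(2q)}$. Comparing exponents forces $s\ge\tfrac{n+1}{4}-\tfrac{n-1}{2q}$.

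The hard part will be the positive direction at the critical exponent $q_n$: one must have the sharp frequency-localized Strichartz estimate for the cone with the correct power of $2^k$, and then justify that the sublinear maximal operator may be interpolated between the three nodes, by linearizing with a measurable $t(x)$ and interpolating the resulting linear operators with bounds uniform in the selection. Verifying that the Knapp example is genuinely realized by the maximum over $0<t<1$—that for each relevant $x$ there is an admissible time making the phase coherent—is the most delicate point on the counterexample side.
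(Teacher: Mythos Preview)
Your proposal is correct and follows essentially the same route as the paper: Littlewood--Paley reduction plus Strichartz (with a Sobolev-in-time step) for $q\ge q_n$, Cowling's bound at $q=2$, interpolation in between, and the same focusing and Knapp-cap data for the necessity of $s\ge n(\tfrac12-\tfrac1q)$ and $s\ge\tfrac{n+1}{4}-\tfrac{n-1}{2q}$. The one notable difference is your $q<2$ argument, which is cleaner than the paper's: you simply note that the maximal function dominates $|f|$ and invoke the failure of $H^s\hookrightarrow L^q(\R^n)$ for $q<2$ via separated bumps, whereas the paper constructs an explicit phase-coherent example for $n\ge2$ and cites Sj\"olin for $n=1$.
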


\begin{theorem}\label{t2} If $q\in[q_n,\infty]$ and $s>n(\frac{1}{2}-\frac{1}{q}),$
then (\ref{fo}) holds. If $q<q_n$ or $s<n(\frac{1}{2}-\frac{1}{q})$,
then (\ref{fo}) does not hold.
\end{theorem}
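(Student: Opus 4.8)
\emph{Overview.} I would prove the two halves separately, reading (\ref{fo}), as the remark suggests, as the mixed-norm estimate $\|e^{\pm it\sqrt{-\Delta}}f\|_{L^q_x L^\infty_t(\R^n\times\R)}\lesssim\|f\|_{H^s}$. For the positive part the idea is to remove the $L^\infty_t$ by a Sobolev embedding \emph{in the time variable}, after which the spatial derivatives produced are exactly those needed to invoke the classical diagonal Strichartz estimate, whose admissible range is precisely $q\ge q_n$. For $2\le q<\infty$ and $\sigma>1/q$ one has $\|g\|_{L^\infty(\R)}\lesssim\|(1-\partial_t^2)^{\sigma/2}g\|_{L^q(\R)}$, so applying this in $t$ for each fixed $x$ and then taking $L^q_x$ gives, by Fubini (the two exponents agree),
\[
\Big\|\sup_{t\in\R}|e^{\pm it\sqrt{-\Delta}}f|\Big\|_{L^q_x}\lesssim \|(1-\partial_t^2)^{\sigma/2}e^{\pm it\sqrt{-\Delta}}f\|_{L^q_xL^q_t}=\|(1-\partial_t^2)^{\sigma/2}e^{\pm it\sqrt{-\Delta}}f\|_{L^q_{t,x}}.
\]

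\emph{Positive part.} The key point is that the time-frequency of $t\mapsto e^{\pm it\sqrt{-\Delta}}f(x)$ is exactly $\pm|\xi|$, so $(1-\partial_t^2)^{\sigma/2}$ acts as the multiplier $(1+|\xi|^2)^{\sigma/2}$, i.e. $(1-\partial_t^2)^{\sigma/2}e^{\pm it\sqrt{-\Delta}}f=e^{\pm it\sqrt{-\Delta}}(1-\Delta)^{\sigma/2}f$ up to harmless constants. I would then invoke the diagonal Strichartz estimate $\|e^{\pm it\sqrt{-\Delta}}g\|_{L^q_{t,x}}\lesssim\|g\|_{\dot H^{s_0}}$ with $s_0=n(\tfrac12-\tfrac1q)-\tfrac1q$, whose wave-admissibility condition is exactly $q\ge q_n$ (and which at $q=q_n$ is Strichartz's estimate with $s_0=\tfrac12$). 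Composing, and using $\|(1-\Delta)^{\sigma/2}f\|_{\dot H^{s_0}}\lesssim\|f\|_{H^{s_0+\sigma}}$ (the low frequencies are harmless because $s_0\ge\tfrac12>0$ throughout $q\ge q_n$), yields (\ref{fo}) for $s=s_0+\sigma>n(\tfrac12-\tfrac1q)$, since $\sigma$ may be taken arbitrarily close to $1/q$. The endpoint $q=\infty$ is separate and elementary: $\sup_{t,x}|e^{\pm it\sqrt{-\Delta}}f|\le\|\widehat f\|_{1}\lesssim\|f\|_{H^s}$ for $s>n/2$ by Cauchy--Schwarz.

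\emph{Sharpness in regularity.} Since $0\in\R$ we have $\sup_{t\in\R}|e^{\pm it\sqrt{-\Delta}}f|\ge|f|$ pointwise, so (\ref{fo}) forces the Sobolev embedding $\|f\|_{L^q}\lesssim\|f\|_{H^s}$. Testing this on $f$ with $\widehat f=\mathbf 1_{\{|\xi|\le N\}}$ gives $\|f\|_{L^q}\sim N^{n/q'}$ and $\|f\|_{H^s}\sim N^{s+n/2}$, whose ratio $N^{\,n(1/2-1/q)-s}$ blows up as $N\to\infty$ precisely when $s<n(\tfrac12-\tfrac1q)$; this rules out those $s$ for every $q$.

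\emph{Sharpness in integrability, and the main obstacle.} To disprove (\ref{fo}) for \emph{every} $s$ when $q<q_n$, I would first restrict to data with $\widehat f$ supported in the fixed annulus $\{\tfrac12\le|\xi|\le\tfrac32\}$, where $\|f\|_{H^s}\sim_s\|f\|_{L^2}$ for each fixed $s$; it then suffices to disprove $\|\sup_t|e^{\pm it\sqrt{-\Delta}}f|\|_{L^q}\lesssim\|f\|_{L^2}$. For this I would use a Knapp packet: let $\widehat f$ be a smooth bump on the slab $\{\,|\xi_1-1|\le\tfrac12,\ |\xi'|\le\theta\,\}$ with $\theta\to0$. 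Expanding $|\xi|=\xi_1+\tfrac{|\xi'|^2}{2\xi_1}+\cdots$, the deviation from the linear phase is $O(\theta^2)$, so $e^{\pm it\sqrt{-\Delta}}f$ remains a coherent packet of height $\sim\theta^{\,n-1}$ travelling at unit speed along $\mp e_1$ for $|t|\lesssim\theta^{-2}$; hence $\sup_t|e^{\pm it\sqrt{-\Delta}}f|\gtrsim\theta^{\,n-1}$ on a tube of dimensions $\theta^{-2}\times(\theta^{-1})^{\,n-1}$, of volume $\theta^{-(n+1)}$. Comparing $\|\sup_t|e^{\pm it\sqrt{-\Delta}}f|\|_{L^q}\gtrsim\theta^{\,n-1}\theta^{-(n+1)/q}$ with $\|f\|_{L^2}\sim\theta^{(n-1)/2}$ gives the ratio $\theta^{(n-1)/2-(n+1)/q}$, which tends to infinity as $\theta\to0$ exactly when $q<q_n$. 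I expect this Knapp construction to be the crux: one must control the quadratic and higher curvature terms in $|\xi|$ to guarantee coherence over the full interval $|t|\lesssim\theta^{-2}$, and verify that the supremum is comparably large on the \emph{whole} swept tube rather than only near the instantaneous packet. By contrast the positive direction is essentially bookkeeping once the diagonal Strichartz estimate and the time-Sobolev embedding are in hand, the only care being the homogeneous-to-inhomogeneous passage at low frequency.
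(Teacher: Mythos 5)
Your proposal is correct and follows essentially the same route as the paper: the positive part combines a Sobolev embedding in the time variable (commuted through the propagator into a spatial derivative) with the diagonal Strichartz estimate, which is exactly the content of Theorem~\ref{weak} and Corollary~\ref{cord}, and your Knapp packet for $q<q_n$ is, after rescaling to frequency $N$, precisely the paper's sector example with the sets $A$ and $E$, the choice $t(x)=|x|$, and $\la\to\infty$. The only cosmetic differences are your cleaner $t=0$/Sobolev-embedding argument for the necessity of $s\ge n(\frac{1}{2}-\frac{1}{q})$ (the paper instead takes $\la=0$ in the same family of examples) and the unit-frequency normalization of the Knapp example.
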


We will also briefly consider the local bounds
\begin{equation}\label{th2}
\|\sup_{0<t<1}|e^{\pm it\sqrt{-\Delta}}f|\,\|_{L^q(\mathbb{B}^n)}\le
C_{n,q,s} \|f\|_{H^s(\R^n)}.
\end{equation}
and
\begin{equation}\label{fo2}
\|\sup_{t\in\R}|e^{\pm
it\sqrt{-\Delta}}f|\,\|_{L^q(\mathbb{B}^n)}\le C_{n,q,s}
\|f\|_{H^s(\R^n)}.
\end{equation}
That (\ref{th2}) and (\ref{fo2}) hold when $q\in [1,2]$ and $s>1/2$
is due to Vega~\cite{ve1,ve2}, and that this is not true when $s\le
1/2$ is due to B.G. Walther~\cite{wa}.

In the following theorem we prove that (\ref{th2}) does not hold
when $s<\frac{n+1}{4}-\frac{n-1}{2q}\,$ which is an improvement of
the fact that (\ref{th2}) does not hold when
$s<\frac{n}{4}-\frac{n-1}{2q}$, due to Sj\"olin \cite{sj5}.
\begin{theorem}
If $q\in[1,\infty]$ and $s>\max\{1/2,s_{n,q}\},$ then (\ref{th2})
and (\ref{fo2}) hold. If $s<\max\{1/2,s_{n,q}\}$, then (\ref{th2})
and (\ref{fo2}) do not hold.
\end{theorem}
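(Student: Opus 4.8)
The plan is to obtain the positive statements by combining Theorems~\ref{t1} and~\ref{t2} with the known endpoint bounds of Vega, and to obtain the negative statements from three explicit examples, the Knapp-type plank being the new one. Note first that $\max\{1/2,s_{n,q}\}=s_{n,q}$ when $q\ge2$ and $\max\{1/2,s_{n,q}\}=1/2$ when $q\le2$, since the second term in $s_{n,q}$ equals $1/2$ at $q=2$ and is increasing in $q$. For the positive part of (\ref{th2}) with $q\ge2$, the inclusion $\mathbb{B}^n\subset\R^n$ gives $\|g\|_{L^q(\mathbb{B}^n)}\le\|g\|_{L^q(\R^n)}$, so the global bound (\ref{th}) of Theorem~\ref{t1} yields (\ref{th2}) for $s>s_{n,q}=\max\{1/2,s_{n,q}\}$. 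For $q<2$, Hölder's inequality on the finite-measure set $\mathbb{B}^n$ gives $\|g\|_{L^q(\mathbb{B}^n)}\lesssim\|g\|_{L^2(\mathbb{B}^n)}$, reducing matters to $q=2$, which holds for $s>s_{n,2}=1/2$.

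For (\ref{fo2}) the constraint in Theorem~\ref{t2} is $q\ge q_n$, where $\max\{1/2,s_{n,q}\}=s_{n,q}=n(\tfrac12-\tfrac1q)$ and restricting (\ref{fo}) to $\mathbb{B}^n$ gives the claim. I would cover the remaining range $2\le q\le q_n$ by interpolation. Linearise the maximal operator by replacing $\sup_t|e^{\pm it\sqrt{-\Delta}}f(x)|$ with $|e^{\pm it(x)\sqrt{-\Delta}}f(x)|$ for a measurable function $t(\cdot)$, and set $f=(1-\Delta)^{-z/2}g$ to obtain the analytic family $T_zg(x)=e^{\pm it(x)\sqrt{-\Delta}}(1-\Delta)^{-z/2}g(x)$ on $\mathbb{B}^n$. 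This family is admissible for Stein interpolation because $(1-\Delta)^{-i\beta/2}$ is an isometry of $L^2(\R^n)$, so the vertical-line bounds are uniform. Interpolating the endpoint $q=2$, $s>1/2$ (Vega) with the endpoint $q=q_n$, $s>n/(n+1)$ (Theorem~\ref{t2}) gives $L^2\to L^q(\mathbb{B}^n)$ for $1/q=(1-\theta)/2+\theta/q_n$ and $s>(1-\theta)\tfrac12+\theta\tfrac{n}{n+1}$, uniformly in $t(\cdot)$. Since both endpoints and $s_{n,q}=\tfrac{n+1}4-\tfrac{n-1}{2q}$ are affine in $1/q$, this threshold equals $s_{n,q}=\max\{1/2,s_{n,q}\}$, giving (\ref{fo2}) on $2\le q\le q_n$; the range $q<2$ follows from Vega's bound and Hölder as before.

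For the negative part it suffices to exhibit failure below whichever of $1/2$, $n(\tfrac12-\tfrac1q)$, $\tfrac{n+1}4-\tfrac{n-1}{2q}$ realises the maximum. Failure for $s<1/2$ is Walther's result when $q\le2$ and follows for $q>2$ by the Hölder nesting above. Failure for $s<n(\tfrac12-\tfrac1q)$, relevant once $q\ge q_n$, comes from $\widehat f=\chi_{\{|\xi|\sim N\}}$: for $|x|\lesssim1/N$ and $t$ near $0$ the phase $x\cdot\xi\pm t|\xi|$ is coherent, so $\sup_t|e^{\pm it\sqrt{-\Delta}}f|\gtrsim N^n$ on a set of measure $\sim N^{-n}$, whence $N^{n(1-1/q)}\lesssim\|f\|_{H^s}\sim N^{n/2+s}$ forces $s\ge n(\tfrac12-\tfrac1q)$. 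The new example is the Knapp plank $\widehat f=\chi_R$, $R=\{\xi_1\sim N,\ |\xi'|\le N^{1/2}\}$, for which $|R|\sim N^{(n+1)/2}$ and $\|f\|_{H^s}\sim N^{s+(n+1)/4}$. Using $|\xi|\approx\xi_1+|\xi'|^2/(2\xi_1)$, for $x$ with $|x'|\lesssim N^{-1/2}$ and the choice $t=\mp x_1\in(0,1)$ the phase varies by $O(1)$ across $R$, so $\sup_t|e^{\pm it\sqrt{-\Delta}}f(x)|\gtrsim|R|$ on a set of measure $\sim N^{-(n-1)/2}$; this gives $N^{(n+1)/2-(n-1)/(2q)}\lesssim N^{s+(n+1)/4}$ and hence forces $s\ge\tfrac{n+1}4-\tfrac{n-1}{2q}$. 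As $t\in(0,1)$, the same example defeats both (\ref{th2}) and (\ref{fo2}).

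I expect the genuine work to lie in two places. First, in the interpolation for (\ref{fo2}) on $2\le q\le q_n$: justifying the linearisation and measurability, verifying that the analytic family is admissible with bounds uniform in $t(\cdot)$, and handling the strict inequalities at the endpoints. Second, in the Knapp example, the careful verification of phase coherence --- in particular controlling the curvature term $|\xi'|^2/(2\xi_1)$ and the coupling between the free time parameter $t$ and the spatial variable $x_1$ --- so that the lower bound $\sup_t|e^{\pm it\sqrt{-\Delta}}f|\gtrsim|R|$ holds on a set of the claimed measure with $t$ genuinely in $(0,1)$.
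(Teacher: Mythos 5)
Your proposal is correct and follows essentially the same route as the paper: the positive part is assembled from Vega's local $L^2$ bound, the global Theorems \ref{t1} and \ref{t2}, H\"older on the ball and the same weighted-$L^2$ interpolation, while your Knapp plank with $t=\mp x_1$ is a Cartesian rewriting of the paper's angular sector $|\theta_{\xi,e_n}|<N^{-\lambda}/10$ with $t(x)=|x|$ at $\lambda=1/2$, and your radial example is its $\lambda=0$ case. The only cosmetic difference is that for the necessity of $s\ge 1/2$ you cite Walther, whereas the paper gives its own explicit construction with $t(x)=|x|\cos\theta_{x,e_n}$.
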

\vspace{2em}
\[\begin{picture}(70,110)
\put(-20,0){\line(0,0){100}} \put(-20,0){\line(1,0){115}}
\put(-31,100){$s$} \put(47,24){\line(0,1){76}}
\put(-8,75){$(\ref{fo})$} \put(25,65){$(\ref{th})$}
\put(55,50){$(\ref{th2})\,\, \mathrm{ and }\,\, (\ref{fo2})$}
\put(15,40){\line(0,1){60}} \put(-20,75){\line(1,-1){35}}
 \put(15,40){\line(2,-1){32}}
\put(47,24){\line(1,0){31}}
\put(-32,23){$\frac{1}{2}$}\put(-32,72){$\frac{n}{2}$}\put(-38,38){$\frac{n}{n+1}$}
\put(2,-11){$\frac{n-1}{2(n+1)}$}\put(44,-11){$\frac{1}{2}$}\put(75,-11){$1$}\put(97,-11){$\frac{1}{q}$}
\put(-69,117){Region of boundedness for (\ref{th}), (\ref{fo}),
(\ref{th2}) and (\ref{fo2}).}
\end{picture}\]
\vspace{1em}

When $q=\infty$, there is a well known example (see for example
\cite{sj2}), that shows that $s>n/2$ is necessary for (\ref{th}),
(\ref{fo}), (\ref{th2}) and (\ref{fo2}) to hold. We also note that,
by the counterexample of Walther~\cite{wa}, $s>1/2$ is necessary for
(\ref{th}) to hold when $q=2$. We will not discuss these endpoint
cases further.

 Throughout, $C$ will denote an absolute constant whose value
may change from line to line.

\section{The positive results}

As usual, we define $\partial^\alpha_t$ by $
\widehat{\partial^\alpha_t
g}(\tau)=(2\pi|\tau|)^\alpha\widehat{g}(\tau),$ where $\alpha\ge 0.$
By the following theorem and Sobolev imbedding, we see that
(\ref{th}) and (\ref{fo}) hold when $q\ge q_n$  and
$s>n(\frac{1}{2}-\frac{1}{q}).$

\begin{theorem}\label{weak} Let $q\in[q_n,\infty)$
 and $s>\frac{n}{2}-\frac{n+1}{q}+\alpha$. Then there exists a
constant $C_{n,q,\alpha,s}$ such that $$
 \|\partial^\alpha_te^{\pm it \sqrt{-\Delta}}f\|_{L^q(\R^{n+1})} \le
 C_{n,q,\alpha,s}\|f\|_{H^s(\R^{n})}.
 $$
\end{theorem}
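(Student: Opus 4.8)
The plan is to deduce the estimate from the classical Strichartz inequality for the cone, together with a Littlewood--Paley decomposition in which the surplus of regularity $s>\frac n2-\frac{n+1}{q}+\alpha$ supplies the summability.

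First I would remove the time derivative. For fixed $x$, the one--dimensional Fourier transform in $t$ of $\,t\mapsto e^{\pm it\sqrt{-\Delta}}f(x)\,$ is supported on $\{\tau=\pm|\xi|\}$, so the multiplier $(2\pi|\tau|)^\alpha$ acts there as $(2\pi|\xi|)^\alpha$. Hence
\[
\partial^\alpha_t e^{\pm it\sqrt{-\Delta}}f=(2\pi)^\alpha e^{\pm it\sqrt{-\Delta}}\bigl(|D|^\alpha f\bigr),\qquad \widehat{|D|^\alpha f}(\xi)=|\xi|^\alpha\widehat f(\xi).
\]
Since $\langle\xi\rangle^{\sigma}|\xi|^{\alpha}\le\langle\xi\rangle^{\sigma+\alpha}$ we have $\||D|^\alpha f\|_{H^\sigma}\le\|f\|_{H^{\sigma+\alpha}}$, so it suffices to prove, for every $\sigma>\frac n2-\frac{n+1}{q}$, that
\[
\|e^{\pm it\sqrt{-\Delta}}h\|_{L^q(\R^{n+1})}\lesssim\|h\|_{H^\sigma(\R^n)};
\]
applying this with $h=|D|^\alpha f$ and $\sigma=s-\alpha$ then gives the theorem.

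The key input is the frequency--localized bound
\[
\|e^{\pm it\sqrt{-\Delta}}P_Nh\|_{L^q(\R^{n+1})}\lesssim N^{s_0}\|P_Nh\|_{L^2(\R^n)},\qquad s_0:=\tfrac n2-\tfrac{n+1}{q},
\]
where $P_N$ projects onto frequencies $|\xi|\sim N$. For $N\sim1$ this is precisely the diagonal Strichartz estimate for the wave equation at the admissible pair $(q,q)=(q_n,q_n)$, equivalently the $L^2\to L^{q_n}$ extension estimate for the truncated cone, whose principal curvatures are nonvanishing once the vertex has been excised by the frequency cutoff; the endpoint $q=q_n$ is the classical theorem of Strichartz, and $q>q_n$ follows by interpolation with the trivial bound. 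The case of general $N$ then follows from the natural scaling $\xi\mapsto N\xi$, $(x,t)\mapsto(Nx,Nt)$ of the cone, which reproduces exactly the factor $N^{\,n/2-(n+1)/q}=N^{s_0}$.

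It remains to sum the pieces. Decomposing $h=\sum_N P_Nh$ over dyadic $N$ and using the triangle inequality in $L^q$,
\[
\|e^{\pm it\sqrt{-\Delta}}h\|_{L^q}\le\sum_N\|e^{\pm it\sqrt{-\Delta}}P_Nh\|_{L^q}\lesssim\sum_N N^{s_0}\|P_Nh\|_{L^2}.
\]
For $q\ge q_n$ one has $s_0\ge\frac12>0$, so the low frequencies $N<1$ contribute $\lesssim\bigl(\sum_{N<1}N^{2s_0}\bigr)^{1/2}\|h\|_{L^2}\lesssim\|h\|_{H^\sigma}$ by Cauchy--Schwarz and the orthogonality of the $P_Nh$. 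For $N\ge1$ I write $\|P_Nh\|_{L^2}\sim N^{-\sigma}\|\langle D\rangle^\sigma P_Nh\|_{L^2}$ and exploit $s_0-\sigma<0$, so Cauchy--Schwarz in $N$ gives $\sum_{N\ge1}N^{s_0-\sigma}\|\langle D\rangle^\sigma P_Nh\|_{L^2}\lesssim\bigl(\sum_{N\ge1}N^{2(s_0-\sigma)}\bigr)^{1/2}\|h\|_{H^\sigma}\lesssim\|h\|_{H^\sigma}$, the geometric series converging precisely because $\sigma>s_0$. The main obstacle is the base estimate at the endpoint $q=q_n$, which is the sharp Stein--Tomas--type bound for the cone; this is genuinely delicate and I would cite it rather than reprove it. Everything else --- the passage from $\partial^\alpha_t$ to $|D|^\alpha$, the scaling, and the dyadic summation --- is routine once the room $\sigma-s_0>0$ (that is, $s-\alpha-s_0>0$) is used to compensate for the failure of $L^q$ with $q>2$ to be a Hilbert space.
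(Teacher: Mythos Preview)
Your proof is correct and follows essentially the same approach as the paper: remove the time derivative by passing to $|D|^\alpha f$, reduce via Littlewood--Paley to a frequency-localized estimate, rescale to unit frequency, and cite the Strichartz inequality for the cone. The paper is terser, writing ``by the standard Littlewood--Paley arguments'' in place of your explicit dyadic summation with the low/high frequency split, but the content is identical.
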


\begin{proof} First we observe that
 $\partial^\alpha_te^{\pm it \sqrt{-\Delta}}f=e^{\pm it
\sqrt{-\Delta}}f_\alpha,$ where
$\widehat{f_\alpha}(\xi)=(2\pi|\xi|)^\alpha\widehat{f}(\xi).$ Thus,
it will suffice to prove that
$$
\|e^{\pm it \sqrt{-\Delta}}f_\alpha\|_{L^q(\R^{n+1})} \le
 C_{n,q,\alpha,s}\|f_\alpha\|_{H^s(\R^{n})},
 $$
 where $q\ge q_n$ and $s>\frac{n}{2}-\frac{n+1}{q}$. By the standard
Littlewood--Paley arguments,
 it will suffice to show that
$$
\|e^{\pm it \sqrt{-\Delta}}g\|_{L^q(\R^{n+1})} \le
 C_{n,q}N^{n/2-(n+1)/q}\|g\|_{L^2(\R^n)},
 $$
 where $\textrm{supp } \widehat{g} \subset \{\xi\,:\, N/2\le|\xi|\le N\}$.

 Now by scaling, this is equivalent to
 $$
\|e^{\pm it \sqrt{-\Delta}}g\|_{L^q(\R^{n+1})} \le
 C_{n,q}\|g\|_{L^2(\R^n)},
 $$
  where $\,\textrm{supp } \widehat{g} \subset \{\xi\,:\, 1/2\le|\xi|\le 1\},$ which follows for
all
  $q\ge q_n$ by the Strichartz inequality~\cite{st}.
\end{proof}

It is tempting to try to increase the range of $q$ in the above
using bilinear restriction estimates on the cone as in \cite{ta1}.
Later we will see that this is not possible.

\begin{cor}\label{cord}
If $q\in[q_n,\infty)$
 and $s>n(\frac{1}{2}-\frac{1}{q})$, then (\ref{th}) and (\ref{fo}) hold.
\end{cor}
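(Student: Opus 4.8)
The plan is to pass from the fixed-time space-time estimate of Theorem~\ref{weak} to the maximal estimates by a Sobolev embedding in the time variable. Since $\sup_{0<t<1}|\,\cdot\,|\le\sup_{t\in\R}|\,\cdot\,|$, it suffices to establish (\ref{fo}); then (\ref{th}) follows immediately.

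First I would fix $x\in\R^n$ and apply the one-dimensional Sobolev embedding $W^{\alpha,q}(\R)\hookrightarrow L^\infty(\R)$, which is valid precisely when $\alpha q>1$, to the function $t\mapsto e^{\pm it\sqrt{-\Delta}}f(x)$. Using the equivalence $\|g\|_{W^{\alpha,q}}^q\approx\|g\|_{L^q}^q+\|\partial^\alpha_t g\|_{L^q}^q$, this gives, for any $\alpha>1/q$,
\[
\sup_{t\in\R}|e^{\pm it\sqrt{-\Delta}}f(x)|^q\le C\int_\R\Big(|e^{\pm it\sqrt{-\Delta}}f(x)|^q+|\partial^\alpha_t e^{\pm it\sqrt{-\Delta}}f(x)|^q\Big)\,dt.
\]
Integrating in $x$ over $\R^n$ and applying Fubini, this becomes
\[
\big\|\sup_{t\in\R}|e^{\pm it\sqrt{-\Delta}}f|\big\|_{L^q(\R^n)}^q\le C\Big(\|e^{\pm it\sqrt{-\Delta}}f\|_{L^q(\R^{n+1})}^q+\|\partial^\alpha_t e^{\pm it\sqrt{-\Delta}}f\|_{L^q(\R^{n+1})}^q\Big).
\]

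Next I would bound the two space-time norms on the right using Theorem~\ref{weak}. The first term is the case $\alpha=0$, bounded by $C\|f\|_{H^s}$ once $s>\frac{n}{2}-\frac{n+1}{q}$, which is implied by $s>n(\frac12-\frac1q)$. The second term is bounded by $C\|f\|_{H^s}$ provided $s>\frac{n}{2}-\frac{n+1}{q}+\alpha$. It then remains only to choose $\alpha$. Since the hypothesis $s>n(\frac12-\frac1q)=\frac{n}{2}-\frac{n}{q}$ is equivalent to $s-\frac{n}{2}+\frac{n+1}{q}>\frac1q$, the interval $\big(\tfrac1q,\ s-\tfrac{n}{2}+\tfrac{n+1}{q}\big)$ is nonempty; any $\alpha$ in it simultaneously satisfies $\alpha>1/q$, so the embedding applies, and $s>\frac{n}{2}-\frac{n+1}{q}+\alpha$, so Theorem~\ref{weak} applies. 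This proves (\ref{fo}), and hence (\ref{th}), for all $q\in[q_n,\infty)$.

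The argument is essentially soft once Theorem~\ref{weak} (and thereby the Strichartz inequality) is in hand, so I do not expect a serious obstacle. The one genuine point is the \emph{strict} inequality $\alpha>1/q$ forced by the endpoint failure of the Sobolev embedding: this is exactly what produces the open condition $s>n(\frac12-\frac1q)$ rather than equality, and it is the mechanism by which the extra $\tfrac1q$ of regularity is consumed. The only other thing to check is that the fractional time derivative $\partial^\alpha_t e^{\pm it\sqrt{-\Delta}}f$ is indeed covered, which is immediate since Theorem~\ref{weak} is stated directly for this quantity.
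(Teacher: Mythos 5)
Your proposal is correct and is exactly the paper's argument: the authors state Corollary~\ref{cord} as following from Theorem~\ref{weak} ``and Sobolev imbedding,'' and your write-up simply fills in the details of that one-dimensional embedding $W^{\alpha,q}(\R)\hookrightarrow L^\infty(\R)$ in the time variable with $\alpha>1/q$, which is where the extra $1/q$ of regularity is spent. The bookkeeping (choosing $\alpha\in(\tfrac1q,\,s-\tfrac n2+\tfrac{n+1}q)$, and noting the $\alpha=0$ term is also controlled) is accurate.
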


The following theorem is a corollary of a more general result due to
Cowling~\cite{co}.

\begin{theorem}
If q=2 and $s>1/2$ then (\ref{th}) holds.
\end{theorem}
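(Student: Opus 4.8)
Although this statement follows from Cowling's general multiplier theorem~\cite{co}, I would also give a direct and self-contained argument, which I organise as follows. The plan is to combine a Littlewood--Paley decomposition in the frequency variable with a Sobolev (Agmon-type) embedding in the time variable, exploiting the fact that $e^{\pm it\sqrt{-\Delta}}$ is unitary on $L^2(\R^n)$ and that differentiation in $t$ costs exactly a factor of the frequency. Write $f=\sum_{k}f_k$, where $\widehat{f_k}$ is supported in $\{|\xi|\sim 2^k\}$, and set $u_k(x,t)=e^{\pm it\sqrt{-\Delta}}f_k(x)$.

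First I would establish the single-frequency bound
\[
\Big\|\sup_{0<t<1}|u_k|\Big\|_{L^2(\R^n)}\lesssim 2^{k/2}\|f_k\|_{L^2(\R^n)}.
\]
The starting point is the elementary inequality, valid for any sufficiently smooth $g$ on $(0,1)$,
\[
\sup_{0<t<1}|g(t)|^2\le \int_0^1|g|^2\,dt+2\Big(\int_0^1|g|^2\,dt\Big)^{1/2}\Big(\int_0^1|g'|^2\,dt\Big)^{1/2},
\]
which follows from the fundamental theorem of calculus applied to $|g|^2$, together with Cauchy--Schwarz. Applying this with $g(t)=u_k(x,t)$, integrating in $x$, and using Cauchy--Schwarz in $x$ on the cross term, I obtain
\[
\int_{\R^n}\sup_{0<t<1}|u_k|^2\,dx\le \int_0^1\!\!\int_{\R^n}|u_k|^2\,dx\,dt+2\Big(\int_0^1\!\!\int_{\R^n}|u_k|^2\,dx\,dt\Big)^{1/2}\Big(\int_0^1\!\!\int_{\R^n}|\partial_t u_k|^2\,dx\,dt\Big)^{1/2}.
\]
Since $e^{\pm it\sqrt{-\Delta}}$ is an isometry on $L^2$, one has $\|u_k(\cdot,t)\|_{L^2}=\|f_k\|_{L^2}$ for every $t$; moreover $\partial_t u_k=\pm i\sqrt{-\Delta}\,u_k$, so $\|\partial_t u_k(\cdot,t)\|_{L^2}\sim 2^k\|f_k\|_{L^2}$ because $\widehat{f_k}$ lives where $|\xi|\sim 2^k$. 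Feeding these two facts in gives $\int\sup_t|u_k|^2\,dx\lesssim \|f_k\|_{L^2}^2+2^k\|f_k\|_{L^2}^2\lesssim 2^k\|f_k\|_{L^2}^2$ for $k\ge 0$, which is the claimed single-frequency bound.

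Finally I would sum over $k$ using the triangle inequality $\sup_t|u|\le\sum_k\sup_t|u_k|$ and Cauchy--Schwarz in $k$:
\[
\Big\|\sup_{0<t<1}|u|\Big\|_{L^2}\le\sum_k 2^{k/2}\|f_k\|_{L^2}\le\Big(\sum_k 2^{k(1-2s)}\Big)^{1/2}\Big(\sum_k 2^{2ks}\|f_k\|_{L^2}^2\Big)^{1/2}\sim\Big(\sum_k 2^{k(1-2s)}\Big)^{1/2}\|f\|_{H^s}.
\]
The geometric series $\sum_{k\ge 0}2^{k(1-2s)}$ converges precisely when $s>1/2$, which is exactly the source of the hypothesis. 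I do not expect a serious obstacle here; the only delicate point is that the crude triangle inequality over the dyadic blocks forces the strict inequality $s>1/2$, consistent with Walther's counterexample~\cite{wa} showing failure at $s=1/2$. The argument uses no oscillatory-integral or restriction input, only conservation of the $L^2$ norm under the flow and the exact scaling of $\partial_t$, which is what makes the endpoint $q=2$ so much softer than the $q<q_n$ range treated elsewhere in the paper.
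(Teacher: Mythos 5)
Your argument is correct and complete, but it is genuinely different from what the paper does: the paper offers no proof at all for this statement, simply recording it as a corollary of a more general multiplier theorem of Cowling \cite{co}. Your route --- Littlewood--Paley blocks, the Agmon-type inequality $\sup_{0<t<1}|g|^2\le\int_0^1|g|^2+2(\int_0^1|g|^2)^{1/2}(\int_0^1|g'|^2)^{1/2}$, unitarity of $e^{\pm it\sqrt{-\Delta}}$ on $L^2$, and the fact that $\partial_t$ costs one full power of the frequency on each block --- is the standard ``energy method'' for maximal estimates, and every step checks out: the cross term is handled correctly by Cauchy--Schwarz in $x$, the single-block bound $2^{k/2}\|f_k\|_{L^2}$ is right (the $2\pi$ you drop from $\partial_t u_k=\pm 2\pi i\sqrt{-\Delta}\,u_k$ in the paper's normalisation is harmless), and the final summation converges exactly for $s>1/2$, with the low-frequency blocks trivially absorbed. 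What your approach buys is a self-contained, elementary proof that makes transparent why the restriction to $0<t<1$ is essential (the time integral over a bounded interval) and why $1/2$ is the natural threshold, at the cost of losing the generality of Cowling's theorem, which covers a broad class of dispersion relations $\phi(\sqrt{-\Delta})$ at once. If this were to be inserted into the paper it would make the $q=2$ endpoint independent of \cite{co}; the only cosmetic fixes needed are a $\lesssim$ in place of the first $\le$ of your final display and an explicit remark that the sum is taken over $k\ge0$ with $f_0$ carrying the frequencies $|\xi|\le1$.
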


Considering $H^s$ to be a weighted $L^2$ space, we interpolate
between Corollary~\ref{cord} with $q=q_n$, and the previous theorem
to get the following corollary.

\begin{cor}
If $q\in [2,q_n]$ and $s>\frac{n+1}{4}-\frac{n-1}{2q},$ then
(\ref{th}) holds.
\end{cor}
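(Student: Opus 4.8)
The plan is to linearise the sublinear maximal operator and then interpolate the two endpoint estimates already established, namely Corollary~\ref{cord} at $q=q_n$ and the preceding ($q=2$) theorem. The first observation is that at these two exponents the threshold $\frac{n+1}{4}-\frac{n-1}{2q}$ takes the values
\[
\frac{n+1}{4}-\frac{n-1}{2q_n}=\frac{n}{n+1}=n\Big(\frac12-\frac1{q_n}\Big)
\qquad\text{and}\qquad
\frac{n+1}{4}-\frac{n-1}{2\cdot2}=\frac12,
\]
so the two inputs are exactly the endpoints $(q,s)=(q_n,\tfrac{n}{n+1})$ and $(2,\tfrac12)$ of the segment we wish to fill in. Since both $\frac1q$ and $\frac{n+1}{4}-\frac{n-1}{2q}$ are affine in $\frac1q$, the line $s=\frac{n+1}{4}-\frac{n-1}{2q}$ is precisely the segment joining these endpoints, which is what makes the interpolation land on the claimed threshold.

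To make complex interpolation applicable I would linearise: for each measurable $\tau\colon\R^n\to(0,1)$ set $T_\tau f(x)=e^{\pm i\tau(x)\sqrt{-\Delta}}f(x)$, a linear operator in $f$. Since $|T_\tau f(x)|\le \sup_{0<t<1}|e^{\pm it\sqrt{-\Delta}}f(x)|$ pointwise, Corollary~\ref{cord} and the $q=2$ theorem give, uniformly in $\tau$,
\[
\|T_\tau f\|_{L^{q_n}(\R^n)}\le C\,\|f\|_{H^{s_0}(\R^n)}\quad(s_0>\tfrac{n}{n+1}),
\qquad
\|T_\tau f\|_{L^{2}(\R^n)}\le C\,\|f\|_{H^{s_1}(\R^n)}\quad(s_1>\tfrac12).
\]
In the other direction, since $(x,t)\mapsto e^{\pm it\sqrt{-\Delta}}f(x)$ is continuous in $t$ for $f$ in a dense class, a measurable-selection argument yields $\|\sup_{0<t<1}|e^{\pm it\sqrt{-\Delta}}f|\,\|_{L^q}=\sup_\tau\|T_\tau f\|_{L^q}$, so it is enough to bound $T_\tau$ with a constant independent of $\tau$.

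I would then identify $H^s(\R^n)$ with the weighted space $L^2\big((1+|\xi|^2)^{s}\,d\xi\big)$ through the Fourier transform, so that the two inputs read $T_\tau\colon L^2(w_0)\to L^{q_n}$ and $T_\tau\colon L^2(w_1)\to L^2$ with $w_j=(1+|\xi|^2)^{s_j}$. Fixing $q\in(2,q_n)$ and $s>\frac{n+1}{4}-\frac{n-1}{2q}$, I choose $\theta\in(0,1)$ by $\frac1q=\frac{1-\theta}{q_n}+\frac\theta2$, and apply Stein--Weiss interpolation (complex interpolation with change of weight) to the single operator $T_\tau$. This produces $T_\tau\colon L^2(w_\theta)\to L^{q}$ with $w_\theta=w_0^{1-\theta}w_1^{\theta}=(1+|\xi|^2)^{(1-\theta)s_0+\theta s_1}$, i.e. $T_\tau\colon H^{(1-\theta)s_0+\theta s_1}\to L^q$, with a bound uniform in $\tau$. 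Because $(1-\theta)\frac{n}{n+1}+\theta\frac12=\frac{n+1}{4}-\frac{n-1}{2q}<s$, I may take $s_0>\frac{n}{n+1}$ and $s_1>\frac12$ close enough to these values that $(1-\theta)s_0+\theta s_1\le s$; the Sobolev embedding $\|f\|_{H^{(1-\theta)s_0+\theta s_1}}\le\|f\|_{H^s}$ then finishes the bound for $T_\tau$, and taking the supremum over $\tau$ recovers (\ref{th}).

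The one genuine subtlety is the linearisation step: complex interpolation is a statement about linear operators, not about the sublinear $\sup_t$, so the reduction to $T_\tau$ with constants uniform in $\tau$, together with the measurable-selection identity that recovers the maximal function, is what must be handled with care. Everything else is bookkeeping: checking that the interpolated pair $\big(\tfrac1q,(1-\theta)s_0+\theta s_1\big)$ sweeps out exactly the line $s=\frac{n+1}{4}-\frac{n-1}{2q}$, which follows from the affine dependence on $\theta$ and the two verified endpoint values.
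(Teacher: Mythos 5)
Your proposal is correct and takes essentially the same route as the paper: the authors' entire proof is the one sentence ``Considering $H^s$ to be a weighted $L^2$ space, we interpolate between Corollary 2 with $q=q_n$ and the previous theorem,'' and your linearisation of the maximal operator followed by Stein--Weiss interpolation between $(q_n,\tfrac{n}{n+1})$ and $(2,\tfrac12)$ is precisely the standard way to carry that out. The endpoint arithmetic you verify is exactly what the paper implicitly relies on.
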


\section{The negative results}

\begin{theorem}\label{thr}
If (\ref{th}) holds, then $q\in [2,\infty]$ and $s\ge s_{n,q}.$ If
(\ref{fo}) holds, then $q\in [q_n,\infty]$ and $s\ge
n(\frac{1}{2}-\frac{1}{q}).$ If (\ref{th2}) or (\ref{fo2}) hold then
$s\ge\max\{1/2,s_{n,q}\}$.
\end{theorem}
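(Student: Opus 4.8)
The plan is to prove every assertion of Theorem~\ref{thr} by exhibiting explicit families of trial functions and comparing the growth of the two sides as the parameters degenerate. No single example captures all the constraints, so I would combine three complementary constructions and invoke Walther's counterexample~\cite{wa} for the remaining threshold $s\ge 1/2$. Throughout, the $+$ and $-$ signs are interchanged by reflecting $x$ or $t$, so it suffices to treat one. I begin with the scaling obstruction: taking $\widehat f=\chi_{\{N\le|\xi|\le 2N\}}$ gives $\|f\|_{H^s}\sim N^{s+n/2}$, while evaluating the supremum at $t=0$, where $e^{\pm it\sqrt{-\Delta}}f=f$, yields $\|\sup_t|e^{\pm it\sqrt{-\Delta}}f|\|_{L^q}\gtrsim\|f\|_{L^q}\sim N^{n-n/q}$. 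Letting $N\to\infty$ forces $s\ge n(\tfrac12-\tfrac1q)$, and since $f$ concentrates near the origin the same holds over $\mathbb{B}^n$; this supplies the first term of $s_{n,q}$ for all four estimates.

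The core of the argument is the Knapp example on the light cone. I would set $\widehat f=\chi_R$ with $R=\{\xi=(\xi_1,\xi'):\xi_1\sim N,\ |\xi'|\lesssim N\theta\}$, a cap of angular width $\theta$, so that $\|f\|_{H^s}\sim N^{s+n/2}\theta^{(n-1)/2}$. Expanding the phase through $|\xi|=\xi_1+|\xi'|^2/(2\xi_1)+\cdots$ --- the decisive feature being that the cone is flat in the radial direction and curved in the $n-1$ transverse ones --- one checks that $|e^{\pm it\sqrt{-\Delta}}f|\gtrsim|R|\sim N^n\theta^{n-1}$ on the slab $\{|x_1\pm t|\lesssim 1/N,\ |x'|\lesssim 1/(N\theta),\ |t|\lesssim 1/(N\theta^2)\}$. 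Taking the supremum in $t$ therefore sweeps out a spatial set of measure $\sim(N^n\theta^{n+1})^{-1}$, so that
\[
\|\sup_t|e^{\pm it\sqrt{-\Delta}}f|\|_{L^q}\gtrsim N^{\,n-n/q}\,\theta^{\,n-1-(n+1)/q},
\]
and comparison with $\|f\|_{H^s}$ reduces every case to $N^{n/2-n/q-s}\theta^{(n-1)/2-(n+1)/q}\lesssim 1$. For the time-local estimates (\ref{th}) and (\ref{th2}) the coherence window $|t|\lesssim 1/(N\theta^2)$ must intersect $(0,1)$, which forces $\theta\gtrsim N^{-1/2}$; choosing $\theta=N^{-1/2}$ and sending $N\to\infty$ then gives $s\ge\frac{n+1}{4}-\frac{n-1}{2q}$, the second term of $s_{n,q}$.

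It remains to pin down the admissible ranges of $q$. For (\ref{th}) I would superpose $M$ widely separated unit translates, $f=\sum_k\tau_{y_k}g$: almost orthogonality gives $\|f\|_{H^s}\sim M^{1/2}$, while finite propagation speed makes the individual maximal functions essentially disjoint, so the left-hand side is $\gtrsim M^{1/q}$ and $M\to\infty$ forces $q\ge 2$. For the global-in-time estimate (\ref{fo}) the decisive device is instead to fix the frequency $N\sim 1$ and let $\theta\to 0$: then $\|f\|_{H^s}\sim\theta^{(n-1)/2}$ no longer sees $s$, while the swept region has measure $\sim\theta^{-(n+1)}$, so the reduced inequality becomes $\theta^{(n-1)/2-(n+1)/q}\lesssim 1$ as $\theta\to 0$ and fails for every $s$ unless $q\ge q_n$. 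Both constructions exploit the unbounded spatial (and, for the second, temporal) domain, so they correctly impose no range restriction on the local estimates (\ref{th2}) and (\ref{fo2}); for those the outstanding bound $s\ge 1/2$ is exactly Walther's counterexample~\cite{wa}, which together with the two terms above yields $s\ge\max\{1/2,s_{n,q}\}$.

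The step I expect to be most delicate is the wave-packet coherence computation underlying the Knapp example: determining the exact slab dimensions, and in particular the temporal coherence length $1/(N\theta^2)$ produced by the transverse curvature, so as to obtain the precise exponent $\theta^{\,n+1}$ in the swept volume. Equally important is the careful bookkeeping of which degeneration drives each inequality --- $\theta=N^{-1/2}$ with $N\to\infty$ for the time-local estimates, against $\theta\to 0$ at fixed frequency for the global range $q\ge q_n$ --- since it is precisely this distinction that separates the four estimates.
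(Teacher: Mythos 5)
Your argument is correct and, at its core, coincides with the paper's: your Knapp cap of angular width $\theta$ at frequency $N$, together with the swept slab of measure $\sim N^{-n}\theta^{-(n+1)}$, is exactly the paper's sector $A=\{|\theta_{\xi,e_n}|<N^{-\lambda}/10,\ |\xi|<N\}$ paired with $E=\{|\theta_{x,e_n}|<N^{-\lambda},\ |x|<N^{2\lambda-1}\}$ and $t(x)=|x|$ (your $\theta$ is their $N^{-\lambda}$, and your two degenerations $\theta=N^{-1/2}$ and $\theta\to0$ at fixed frequency are their $\lambda=1/2$ and $\lambda\to\infty$); the paper just phrases the coherence condition as $|\xi||x|\,|\cos\theta_{\xi,x}-1|\lesssim 1$ rather than via the Taylor expansion of $|\xi|$, and packages everything into the single inequality $|A|^{1/2}|E|^{1/q}\le CN^s$. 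You genuinely diverge in two places. First, for the necessity of $q\ge2$ in (\ref{th}) the paper stays inside the same framework, taking $t(x)=0$ and letting $E$ be an $N^{-\lambda}$-angular neighbourhood of the hyperplane orthogonal to $e_n$ (so that $x\cdot\xi$ is small by near-orthogonality) and sending $\lambda\to\infty$; your sum of $M$ separated translates with almost orthogonality in $H^s$ is a different, equally valid device --- it is more robust (it would give $q\ge2$ for any translation-invariant maximal bound) but requires you to actually justify the near-disjointness of the maximal functions, since $e^{it\sqrt{-\Delta}}$ has only rapid decay, not strict finite propagation speed, once the data is frequency-localised away from the origin. Second, for the local threshold $s\ge1/2$ you cite Walther, whereas the paper gives a self-contained focusing example ($A$ a sector of width $1/N$, $E$ a fixed sector, $t(x)=|x|\cos\theta_{x,e_n}$); your citation is legitimate --- the introduction attributes this fact to \cite{wa}, and for $q>2$ the bound $s_{n,q}>1/2$ from your Knapp example makes the $1/2$ redundant anyway --- but it leaves that piece non-self-contained. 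Everything else, including the bookkeeping of which degeneration serves which estimate, matches the paper.
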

\begin{proof} By a change of variables, it will suffice to consider $e^{-
it\sqrt{-\Delta}}f$. First we obtain necessary conditions for
$\,\sup_{t\in\R}|e^{- it\sqrt{-\Delta}}f|\,$, and then add the
condition $t\in(0,1)$, to obtain necessary conditions for
$\,\sup_{0<t<1}|e^{- it\sqrt{-\Delta}}f|$.

Let $A$ be a set contained in the ball $B(0,N)$, where $N\gg1,$ and
define $f_A$ by $f_A=\widehat{\chi}_A^{}$. Recall that
$$
\sup_{t\in\R}|e^{-it\sqrt{-\Delta}}f_A|=\sup_{t\in\R}\left|\int_Ae^{2\pi
i(x\cdot\xi- t|\xi|)}d\xi\right|.
$$
 The basic idea that we exploit, is to choose sets $A$ and $E$ for
which a time $t(x)$ can be chosen, so that the phase
$2\pi(x\cdot\xi- t(x)|\xi|)$ is almost zero for all $\xi\in A$ and
$x\in E$. Then, as $ \cos(2\pi (x\cdot\xi- t(x)|\xi|))\ge C,$ we see
that
$$
\|\sup_{t\in\R}|e^{-it\sqrt{-\Delta}}f_A|\,\|_{L^q(\R^n)}\ge
\left(\int_{E} (C|A|)^q\right)^{1/q}\ge C|A||E|^{1/q}.$$ On the
other hand,
$$
\|f_A\|_{H^s(\R^n)}\le\left(\int_A (1+|\xi|)^{2s}\right)^{1/2}\le
|A|^{1/2}(1+N)^s,
$$
so that, as
$\|\sup_{t\in\R}|e^{-it\sqrt{-\Delta}}f_A|\|_{L^q(\R^n)}\le C
\|f_A\|_{H^s(\R^n)}$, we have
\begin{equation}\label{fundy}
|A|^{1/2}|E|^{1/q}\le CN^s
\end{equation}
for all $N\gg 1.$

 When $n=1,$ we let $t(x)=x,$ so that the phase
is equal to zero for all $\xi\in [0,N],$ and $x\in\R$. Thus,
substituting $|E|=|\R|$ in (\ref{fundy}), we see there can be no
bound for $q<\infty.$ When $q=\infty$, substituting $|A|=N$ into
(\ref{fundy}), we see that $s\ge 1/2$, and we have the necessary
conditions for (\ref{fo}). Substituting $|A|=N$ and $E=[0,1]$ into
(\ref{fundy}), we see that $s\ge 1/2$, and we have the necessary
conditions for (\ref{fo2}).

Considering $\,\sup_{0<t<1}|e^{-it\sqrt{-\Delta}}f_A|\,$, we have
the added constraint that we must choose $t(x)$ in the interval
$(0,1)$. Choosing $t(x)=x$ again, and $E=(0,1)$, we see that $s\ge
1/2.$ We note that this is a necessary condition for (\ref{th2}) as
well as (\ref{th}).  That (\ref{th}) does not hold when $q<2$,
follows from an example in \cite{sj0}.

When $n\ge2$, define $A$ by
$$
A=\left\{\xi\in\R^n \,:\, |\theta_{\xi,e_n}|<\frac{\F}{10} \textrm{
and } |\xi|< N\right\},
$$ where $N\gg1,$ $\la\in[0,\infty)$ and $\theta_{\xi,e_n}$ denotes
the angle between $\xi$ and the standard basis vector $e_n$.
Similarly we define $E$ by
$$E=\left\{x\in\R^n \,:\, |\theta_{x,e_n}|<\F
\textrm{ and } |x|< N^{2\la-1}\right\},$$ and let $t(x)=|x|$. Given
that $$|\cos \theta_{\xi,x}-1|\le\left(\frac{\F}{5}\right)^2,$$ we
have
\begin{align*}
|2\pi(x\cdot\xi- t(x)|\xi|)|&= 2\pi|\xi||x||\cos
\theta_{\xi,x}-1|\\
&\le 2\pi N N^{2\la-1}\left(\frac{\F}{5}\right)^2\le\frac{2\pi}{25},
\end{align*}
so that the phase is always close to zero. Now as
$$|A|\ge C_n N (N^{1-\la})^{n-1} \,\,\,\,\,\,\,\,\,\,\textrm{ and
}\,\,\,\,\,\,\,\,\,\, |E|\ge C_n N^{2\la-1} (N^{\la-1})^{n-1},$$ we
see from (\ref{fundy}), that
$$
N^s\ge C_n N^{\frac{n-\la(n-1)}{2}}N^\frac{(n+1)\la-n}{q}
$$
for all $N\gg 1$, so that
$$
s\ge
n\left(\frac{1}{2}-\frac{1}{q}\right)-\la\left(\frac{n-1}{2}-\frac{n+1}{q}\right).
$$

Letting $\la=0$, we see that $s\ge n(\frac{1}{2}-\frac{1}{q})$. When
$q<q_n$, we have $\frac{n-1}{2}-\frac{n+1}{q}<0$, so that we can let
$\la\to\infty$ to get a contradiction for all $s$. This completes
the sufficient conditions for $(\ref{fo})$.

Considering $\,\sup_{0<t<1}|e^{-it\sqrt{-\Delta}}f_A|$, we have the
added condition that $t(x)<1.$ This is fulfilled if $\la\le1/2,$ so
that $|x|< 1$. Letting $\la=0,$  we have $s\ge
n(\frac{1}{2}-\frac{1}{q})$ as before, and letting $\la=1/2$, we get
$s\ge \frac{n+1}{4}-\frac{n-1}{2q}$. We note that these are also
necessary conditions for the local bounds.

It remains to prove that $q\ge 2$ is necessary for the global
boundedness of $\sup_{0<t<1}|e^{-it\sqrt{-\Delta}}f|$, and that
$s\ge 1/2$ is necessary for the local bounds. These will require
separate constructions.

For the global bound, we consider $A$ as defined before with $E$
defined by
$$
E=\left\{x\in\R^n\,:\, |\theta_{x,e}|\le N^{-\la} \textrm{ for some
} e\in\textrm{span}\{e_1,\ldots,e_{n-1}\}, \textrm{ and } |x|<
\frac{N^{\la-1}}{10}\right\},
$$
where $\la\in [0,\infty)$, and we let $t(x)=0$. Then
\begin{align*}
|2\pi(x\cdot\xi- t(x)|\xi|)|&= 2\pi|\xi||x||\cos
\theta_{\xi,x}|\\
&=2\pi|\xi||x||\sin(\pi/2- \theta_{\xi,x})|\\ &\le 2\pi N
\frac{N^{\la-1}}{10}2N^{-\la}\le\frac{4\pi}{10},
\end{align*}
so that the phase is always close to zero. Now as
$$|A|\ge C_n N (N^{1-\la})^{n-1} \,\,\,\,\,\,\,\,\,\,\textrm{ and
}\,\,\,\,\,\,\,\,\,\, |E|\ge C_n N^{-1} (N^{\la-1})^{n-1},$$ we see
from (\ref{fundy}), that
$$
N^s\ge C_n N^{\frac{n-\la(n-1)}{2}}N^\frac{\la(n-1)-n}{q},
$$
so that
$$
s\ge
n\left(\frac{1}{2}-\frac{1}{q}\right)-\la\left(\frac{n-1}{2}-\frac{n-1}{q}\right).
$$
We see that when $q<2,$ we can let $\la\to\infty$ to get a
contradiction for all~$s$.

Finally, for the local bounds, we define $A$ and $E$ by
$$
A=\left\{\xi\in\R^n \,:\, |\theta_{\xi,e_n}|<1/N \textrm{ and }
|\xi|< N\right\}, $$
$$E=\left\{x\in\R^n \,:\, |\theta_{x,e_n}|<1/100
\textrm{ and } |x|< 1\right\},$$ and let $t(x)=|x|\cos
\theta_{x,e_n}$. Now using the inequality $|\cos x-\cos y|\leq
|x^2-y^2|$ we have
\begin{align*}
|2\pi(x\cdot\xi- t(x)|\xi|)|&= 2\pi|\xi||x||\cos \theta_{x,\xi}-\cos
\theta_{x,e_n}|\\
&\le 2\pi N\left|\theta_{x,\xi}^2-\theta_{x,e_n}^2\right|\\
&= 2\pi
N\left|\theta_{x,e_n}-\theta_{x,\xi}\right|\left|\theta_{x,e_n}+\theta_{x,\xi}\right|\\
&\le2\pi N\frac{1}{N}(\frac{1}{100}+\frac{1}{N})\le\frac{1}{3},
\end{align*}
so that the phase is always close to zero. Now as
$$|A|\ge C_n N  \,\,\,\,\,\,\,\,\,\,\textrm{ and }\,\,\,\,\,\,\,\,\,\,
|E|\ge C_n,$$
 we see from (\ref{fundy}), that
$$
N^s\ge C_n N^{1/2}
$$
for all $N\gg1$, so that $s\ge1/2$ and this completes the necessary
conditions for local boundedness.
\end{proof}
\vspace{1em}

Thanks to the referee for bringing an important reference to our
attention.

\begin{bibdiv}
\begin{biblist}

\bib{bo0}{article}{
   author={Bourgain, J.},
   title={A remark on Schr\"odinger operators},
   journal={Israel J. Math.},
   volume={77},
   date={1992},
   number={1-2},
   pages={1--16},
   issn={0021-2172},
   review={ \MR{1194782} (93k:35071)},
}

\bib{bo1}{article}{
    author={Bourgain, J.},
     title={Some new estimates on oscillatory integrals},
 booktitle={Essays on Fourier analysis in honor of Elias M. Stein
            (Princeton, NJ, 1991)},
    series={Princeton Math. Ser.},
    volume={42},
     pages={83\ndash 112},
 publisher={Princeton Univ. Press},
     place={Princeton, NJ},
      date={1995},
    review={ \MR{1315543} (96c:42028)},
}

\bib{car}{article}{
    author={Carbery, A.},
     title={Radial Fourier multipliers and associated maximal
functions},
 booktitle={Recent progress in Fourier analysis (El Escorial, 1983)},
    series={North-Holland Math. Stud.},
    volume={111},
     pages={49\ndash 56},
 publisher={North-Holland},
     place={Amsterdam},
      date={1985},
    review={ MR848141 (87i:42029)},
}

\bib{ca}{article}{
    author={Carleson, L.},
     title={Some analytic problems related to statistical mechanics},
 booktitle={Euclidean harmonic analysis (Proc. Sem., Univ. Maryland,
College
            Park, Md., 1979)},
    series={Lecture Notes in Math.},
    volume={779},
     pages={5\ndash 45},
 publisher={Springer},
     place={Berlin},
      date={1980},
    review={ MR576038 (82j:82005)},
}

\bib{co}{article}{
    author={Cowling, M.},
     title={Pointwise behavior of solutions to Schr\"odinger
equations},
 booktitle={Harmonic analysis (Cortona, 1982)},
    series={Lecture Notes in Math.},
    volume={992},
         pages={83\ndash 90},
 publisher={Springer},
     place={Berlin},
      date={1983},
    review={ MR729347 (85c:34029)},
}

\bib{da}{article}{
    author={Dahlberg, B.E.J.},
    author={Kenig, C.E.},
     title={A note on the almost everywhere behavior of solutions to
the
            Schr\"odinger equation},
 booktitle={Harmonic analysis (Minneapolis, Minn., 1981)},
    series={Lecture Notes in Math.},
    volume={908},
     pages={205\ndash 209},
 publisher={Springer},
     place={Berlin},
      date={1982},
    review={ MR654188 (83f:35023)},
}

\bib{kepove}{article}{
   author={Kenig, C.E.},
   author={Ponce, G.},
   author={Vega, L.},
   title={Oscillatory integrals and regularity of dispersive equations},
   journal={Indiana Univ. Math. J.},
   volume={40},
   date={1991},
   number={1},
   pages={33--69},
   issn={0022-2518},
   review={\MR{1101221} (92d:35081)},
}

\bib{kepove2}{article}{
   author={Kenig, C.E.},
   author={Ponce, G.},
   author={Vega, L.},
   title={Well-posedness of the initial value problem for the Korteweg-de
   Vries equation},
   journal={J. Amer. Math. Soc.},
   volume={4},
   date={1991},
   number={2},
   pages={323--347},
   issn={0894-0347},
   review={MR1086966 (92c:35106)},
}

\bib{keru}{article}{
   author={Kenig, C.E.},
   author={Ruiz, A.},
   title={A strong type $(2,\,2)$ estimate for a maximal operator associated
   to the Schr\"odinger equation},
   journal={Trans. Amer. Math. Soc.},
   volume={280},
   date={1983},
   number={1},
   pages={239--246},
   issn={0002-9947},
   review={\MR{712258} (85c:42010)},
}

\bib{le}{article}{
   author={Lee, Sanghyuk},
   title={On pointwise convergence of the solutions to Schr\"odinger
   equations in $\Bbb R\sp 2$},
   journal={Int. Math. Res. Not.},
   date={2006},
   pages={Art. ID 32597, 21},
   issn={1073-7928},
   review={\MR{2264734}},
}

\bib{movave}{article}{
    author={Moyua, A.},
    author={Vargas, A.},
    author={Vega, L.},
     title={Schr\"odinger maximal function and restriction properties of the
            Fourier transform},
   journal={Internat. Math. Res. Notices},
      date={1996},
    number={16},
     pages={793\ndash 815},
      issn={1073-7928},
    review={ \MR{1413873} (97k:42042)},
}

\bib{movave2}{article}{
   author={Moyua, A.},
   author={Vargas, A.},
   author={},
   author={Vega, L.},
   author={},
   title={Restriction theorems and maximal operators related to oscillatory
   integrals in $\bold R\sp 3$},
   journal={Duke Math. J.},
   volume={96},
   date={1999},
   number={3},
   pages={547--574},
   issn={0012-7094},
   review={ \MR{1671214} (2000b:42017)},
}

\bib{ro1}{article}{
TITLE={A local smoothing estimate for the Schr\"odinger equation},
AUTHOR={Rogers, K.M.},  pages={submitted}}

\bib{rovave}{article}{
   author={Rogers, K.M.},
   author={Vargas, A.},
   author={Vega, L.},
   title={Pointwise convergence of solutions to the nonelliptic
   Schr\"odinger equation},
   journal={Indiana Univ. Math. J.},
   volume={55},
   date={2006},
   number={6},
   pages={1893--1906},
   issn={0022-2518},
   review={\MR{2284549}},
}

\bib{rovi}{article}{
TITLE={Global estimates for the Schr\"odinger maximal operator},
AUTHOR={Rogers, K.M.}, AUTHOR={Villarroya, P.}, journal = {Ann.
Acad. Sci. Fenn. Math.}, pages={to appear}}

\bib{sj}{article}{
    author={Sj{\"o}lin, P.},
     title={Regularity of solutions to the Schr\"odinger equation},
   journal={Duke Math. J.},
    volume={55},
      date={1987},
    number={3},
     pages={699\ndash 715},
      issn={0012-7094},
    review={ MR904948 (88j:35026)},
}

\bib{sj9}{article}{
    author={Sj{\"o}lin, P.},
     title={Global maximal estimates for solutions to the Schr\"odinger
            equation},
   journal={Studia Math.},
    volume={110},
      date={1994},
    number={2},
     pages={105\ndash 114},
      issn={0039-3223},
    review={ MR1279986 (95e:35052)},
}

\bib{sj0}{article}{
    author={Sj{\"o}lin, P.},
     title={$L\sp p$ maximal estimates for solutions to the
Schr\"odinger
            equation},
   journal={Math. Scand.},
    volume={81},
      date={1997},
    number={1},
     pages={35\ndash 68 (1998)},
      issn={0025-5521},
    review={ MR1490774 (98j:35038)},
}

Schr\"odinger
\bib{sj5}{article}{
   author={Sj{\"o}lin, P.},
   title={A counter-example concerning maximal estimates for solutions
to
   equations of Schr\"odinger type},
   journal={Indiana Univ. Math. J.},
   volume={47},
   date={1998},
   number={2},
   pages={593--599},
   issn={0022-2518},
   review={ MR1647940 (99m:35040)},
}

\bib{sj2}{article}{
    author={Sj{\"o}lin, P.},
     title={Spherical harmonics and maximal estimates for the
Schr\"odinger
            equation},
   journal={Ann. Acad. Sci. Fenn. Math.},
    volume={30},
      date={2005},
    number={2},
     pages={393\ndash 406},
      issn={1239-629X},
    review={ MR2173372},
}


\bib{st}{article}{
   author={Strichartz, R.S.},
   title={Restrictions of Fourier transforms to quadratic surfaces and
decay
   of solutions of wave equations},
   journal={Duke Math. J.},
   volume={44},
   date={1977},
   number={3},
   pages={705--714},
   issn={0012-7094},
   review={ MR0512086 (58 \#23577)},
}

 \bib{ta0}{article}{
    author={Tao, T.},
     title={A sharp bilinear restrictions estimate for paraboloids},
   journal={Geom. Funct. Anal.},
    volume={13},
      date={2003},
    number={6},
     pages={1359\ndash 1384},
      issn={1016-443X},
    review={MR2033842 (2004m:47111)},
}

\bib{ta1}{article}{
    author={Tao, T.},
    author={Vargas, A.},
     title={A bilinear approach to cone multipliers. II. Applications},
   journal={Geom. Funct. Anal.},
    volume={10},
      date={2000},
    number={1},
     pages={216\ndash 258},
      issn={1016-443X},
    review={MR1748921 (2002e:42013)},
}

\bib{ve1}{manual}{
 AUTHOR={Vega, L.},
 TITLE={El multiplicador de Schr\"{o}dinger. La funcion maximal y los
operadores
 de restricci\'{o}n},
 NOTE={Universidad Aut\'{o}noma de Madrid (1988)},
 }

 \bib{ve2}{article}{
    author={Vega, L.},
     title={Schr\"odinger equations: pointwise convergence to the
initial
            data},
   journal={Proc. Amer. Math. Soc.},
    volume={102},
      date={1988},
    number={4},
     pages={874\ndash 878},
      issn={0002-9939},
   review={ MR934859 (89d:35046)},
}

\bib{wa}{article}{
   author={Walther, B.G.},
   title={Some $L\sp p(L\sp \infty)$- and $L\sp 2(L\sp 2)$-estimates for
   oscillatory Fourier transforms},
   conference={
      title={Analysis of divergence},
      address={Orono, ME},
      date={1997},
   },
   book={
      series={Appl. Numer. Harmon. Anal.},
      publisher={Birkh\"auser Boston},
      place={Boston, MA},
   },
   date={1999},
   pages={213--231},
   review={\MR{1731268} (2001e:42013)},
}

\end{biblist}
\end{bibdiv}

\end{document}